\providecommand{\U}[1]{\protect\rule{.1in}{.1in}}
\newtheorem{theorem}{Theorem}[section]
\newtheorem{corollary}[theorem]{Corollary}
\newtheorem{example}[theorem]{Example}
\newtheorem{lemma}[theorem]{Lemma}
\newtheorem{final remark}[theorem]{Final Remark}
\newcommand {\N} {\mathbb{N}}
\begin{document}

\title{On the duality of $DW$-compact operators and $DW$-$DP$ operators}
\author{Geraldo Botelho\thanks{Supported by FAPEMIG grants RED-00133-21 and APQ-01853-23}~ and Ariel Mon\c c\~ao\thanks{Supported by a CAPES scholarship. \newline 2020 Mathematics Subject Classification: 46B42, 46B50, 47B07, 47B65.\newline Keywords: Banach lattices, $DW$-compact operators, $DW$-$DP$ operators, Dunford-Pettis operators, almost Dunford-Pettis operators, $AM$-compact operators. }}
\date{}
\maketitle

\begin{abstract} We give a necessary condition and a sufficient condition on the Banach lattices $E$ and $F$ so that an operator from $E$ to $F$ is $DW$-compact whenever its adjoint is $DW$-compact. We do the same, with different conditions, for $DW$-$DP$ operators. Moreover, we characterize the Banach lattices $E$ and $F$ for which the adjoint of every $DW$-compact operator from $E$ to $F$ is $DW$-compact.
\end{abstract}

\section{Introduction}

Let ${\cal C}$ be a class of bounded linear operators from a Banach lattice $E$ to a Banach lattice $F$. The {\it adjoint problem} for the class ${\cal C}$ asks whether or not the following implications hold:
$$T \colon E \longrightarrow F \mbox{ belongs to } {\cal C} \Longleftrightarrow T^* \colon F^* \longrightarrow E^* \mbox{ belongs to } {\cal C} , $$
where $E^*$ is the topological dual of $E$ and $T^*$ is the adjoint of $T$.

In the realm of operators between Banach space, it is known the adjoint problem has positive answer for several classes, for example, for compact operators (Schauder's Theorem), for weakly compact operators (Gantmacher's Theorem) and for approximable operators (Hutton's Theorem). Sometimes none of the implications hold, for instance for the class of Dunford-Pettis operators (also known as completely continuous operators), which are the operators that send weakly null sequences to norm null sequences.

In Banach lattice theory, the adjoint problem has been studied for several classes of operators, for instance: $AM$-compact operators (\cite[Theorem 125.6]{zaanen}, see also \cite{Aqzzouz4}), order weakly compact operators \cite{aqzzouzhmichane}, almost Dunford-Pettis operators \cite{Aqzzouz3}, positive Dunford-Pettis operators \cite{Aqzzouz4}, weak Dunford-Pettis operators \cite{Aqzzouz8}. In this paper we address the adjoint problem for the class of $DW$-compact operators, introduced in \cite{Jin}; as well as one of the implications of the adjoint problem for the class of $DW$-$DP$ operators, recently introduced in \cite{Jin2}. More precisely, in Section 2 we give a necessary condition and a sufficient condition on the Banach lattices $E$ and $F$ so that an operator from $E$ to $F$ is $DW$-compact whenever its adjoint is $DW$-compact; and we give a necessary and sufficient condition so that the adjoint of every $DW$-compact operator from $E$ to $F$ is $DW$-compact. In Section 3 we give a necessary condition and a sufficient condition so that an operator from $E$ to $F$ is $DW$-$DP$ whenever its adjoint is $DW$-$DP$.

Throughout the paper, $E$ and $F$ shall denote Banach lattices, $X$ will denote a Banach space and $X^*$ denotes its (topological) dual. Operators are always linear and bounded. By ${\rm id}_X$ we mean the identity operator on $X$. For background on Banach lattices and positive/regular operators, we refer to \cite{Ali1, MeyerPeter, Schaefer}. Undefined concepts and unexplained symbols can be found in \cite{Ali1}.


\section{$DW$-compact operators}

Let $A$ be a subset of a Banach lattice $E$. The solid hull of $A$ is the set ${\rm Sol}(A) = \{x \in E : |x| \leq |y| \mbox{ for some } y \in A\}$ \cite[p.\,171]{Ali1}. The set $A$ is said to be {\it disjoint weakly compact} if every disjoint sequence in ${\rm Sol}(A)$ is weakly null (see \cite{Wnuk99}). Relatively weakly compact subsets of a Banach lattice are disjoint weakly compact 
\cite[Proposition 2.5.12(iii)]{MeyerPeter}.

According to \cite{Jin}, an operator $T \colon E \longrightarrow X$ is {\it $DW$-compact} if $T$ sends disjoint weakly compact subsets of $E$ to relatively compact subsets of $X$.

In order to state a very useful characterization, we recall two extensively studied classes of operators, which were introduced, respectively, in \cite{sanchez} and \cite{Dodds}. An operator $T \colon E \longrightarrow X$ is said to be:\\
$\bullet$ {\it Almost Dunford-Pettis} if $T$ maps disjoint weakly null sequence in $E$ to norm null sequences in $X$.\\
$\bullet$ {\it $AM$-compact} if $T$ maps order bounded subsets of $E$ to relatively compact subsets of $X$.

\begin{theorem}{\rm \cite[Theorem 2.5]{Jin}} \label{teo.2.5.dw} The following are equivalent for an operator $T \colon E \longrightarrow X$.\\
{\rm (a)} $T$ is $DW$-compact.\\
{\rm (b)} $T$ is almost Dunford-Pettis and $AM$-compact.\\
{\rm (c)} $T$ is Dunford-Pettis and $AM$-compact.
\end{theorem}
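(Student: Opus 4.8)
The proof of Theorem~\ref{teo.2.5.dw} amounts to establishing the equivalence (a)$\Leftrightarrow$(b) and (b)$\Leftrightarrow$(c); since (c)$\Rightarrow$(b) is trivial (every Dunford-Pettis operator is almost Dunford-Pettis, because disjoint weakly null sequences are in particular weakly null), the real content is (a)$\Rightarrow$(b), (b)$\Rightarrow$(a), and the upgrade (b)$\Rightarrow$(c). I would organize everything around a single structural fact about disjoint weakly compact sets: a set $A\subseteq E$ is disjoint weakly compact precisely when both (i) $A$ is ``$L$-weakly compact up to solidity'' in the sense that disjoint sequences in ${\rm Sol}(A)$ are weakly null, and (ii) every order interval $[-x,x]$ (equivalently, every order bounded set) is disjoint weakly compact. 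The point of (ii) is that order intervals in a Banach lattice always have the property that their disjoint sequences are norm null (this is the classical fact that an order bounded disjoint sequence is norm convergent to zero only after passing to an AM-space quotient — more precisely, one uses \cite[Theorem 4.34 or Corollary 4.36]{Ali1} that order bounded disjoint sequences are weakly null in a Banach lattice), so order bounded sets are disjoint weakly compact.

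\medskip
\noindent\textbf{(a)$\Rightarrow$(b).} Assume $T$ is $DW$-compact. For almost Dunford-Pettis: let $(x_n)$ be a disjoint weakly null sequence in $E$. Then $\{x_n : n\in\mathbb{N}\}$ is disjoint weakly compact (any disjoint sequence in its solid hull is, after a routine disjointness argument, essentially dominated by a disjoint subsequence of $(|x_n|)$, hence weakly null), so $T$ maps it to a relatively compact set; combined with $Tx_n \to 0$ weakly (as $T$ is weak-weak continuous) and relative compactness, one gets $\|Tx_n\|\to 0$. For $AM$-compact: every order bounded set is disjoint weakly compact by the fact recalled above, so $T$ maps it to a relatively compact set.

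\medskip
\noindent\textbf{(b)$\Rightarrow$(a).} Assume $T$ is almost Dunford-Pettis and $AM$-compact. Let $A$ be disjoint weakly compact; I must show $T(A)$ is relatively compact. I would use a Kadec--Pe\l czy\'nski / Dodds--Fremlin type dichotomy: given any bounded sequence $(x_n)$ in $A$, either it has a weakly convergent subsequence (and then, since $T$ is also Dunford-Pettis once we know (c), or directly since almost-DP plus some argument handles the weakly convergent case) or it admits a subsequence equivalent to a disjoint sequence plus an order bounded perturbation. Concretely, the standard tool is \cite[Theorem 4.36]{Ali1}: there is a sequence $(y_n)$ with $|y_n|$ pairwise disjoint, $|y_n|\le |x_n|$, and $\|x_n - y_n\| \to 0$ along a subsequence — valid because disjoint sequences in ${\rm Sol}(A)$ are weakly null, which forces the ``uniformly small tails'' behaviour. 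Then $T(y_n)$ is norm null since $(y_n)$ is disjoint weakly null and $T$ is almost Dunford-Pettis, while $T(x_n - y_n)$ is controlled by $AM$-compactness (the differences live in a suitable order bounded set, or go to zero in norm), yielding a norm convergent subsequence of $(Tx_n)$.

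\medskip
\noindent\textbf{(b)$\Rightarrow$(c).} It remains to show an almost Dunford-Pettis $AM$-compact operator is Dunford-Pettis. Let $(x_n)$ be weakly null in $E$. By the Dodds--Fremlin / disjointification argument (again \cite[Theorem 4.36]{Ali1} applied to the relatively weakly compact, hence disjoint weakly compact, set $\{x_n\}$), pass to a subsequence and write $x_n = y_n + z_n$ where $(y_n)$ is disjoint and $(z_n)$ lies in an order bounded set with $\|z_n\|$ controlled, or more precisely split using the lattice operations so that the disjoint part is weakly null and the remainder is order bounded and weakly null; then $\|Ty_n\|\to 0$ by almost Dunford-Pettisness and $\|Tz_n\|\to 0$ by $AM$-compactness together with $Tz_n\to 0$ weakly. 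Hence $\|Tx_n\|\to 0$.

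\medskip
The main obstacle is the disjointification step in (b)$\Rightarrow$(a) and (b)$\Rightarrow$(c): one needs to extract, from an arbitrary bounded sequence in a disjoint weakly compact set, a subsequence splitting into a disjoint (hence almost-DP-killed) part and an order bounded (hence $AM$-compact-killed) part, with the error terms going to zero in norm. This is exactly where the definition of disjoint weak compactness — controlling disjoint sequences in the \emph{solid hull} rather than in $A$ itself — is used in an essential way, via the Kadec--Pe\l czy\'nski decomposition lemma \cite[Theorem 4.36]{Ali1}. The rest is routine combination of ``weakly convergent $+$ relatively compact $\Rightarrow$ norm convergent''.
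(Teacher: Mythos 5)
First, a point of reference: the paper does not prove this theorem at all --- it is quoted verbatim from \cite[Theorem 2.5]{Jin} --- so there is no in-paper argument to compare yours against; I can only assess your sketch on its own terms. Your overall architecture is the right one: (c)$\Rightarrow$(b) is trivial, (a)$\Rightarrow$(b) follows because order intervals and sets of the form $\{x_n\}$ with $(x_n)$ disjoint weakly null are disjoint weakly compact (for the latter, the cleanest route is the fact the paper itself quotes, that relatively weakly compact sets are disjoint weakly compact \cite[Proposition 2.5.12(iii)]{MeyerPeter}; your ``dominated by a disjoint subsequence of $(|x_n|)$, hence weakly null'' quietly assumes $(|x_n|)$ is weakly null, which itself needs that same citation), and once (b)$\Rightarrow$(a) is known, (a)$\Rightarrow$(c) is immediate since weakly null sequences sit inside disjoint weakly compact sets, making your separate argument for (b)$\Rightarrow$(c) redundant.

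The genuine gap is in the central step (b)$\Rightarrow$(a). Your dichotomy --- ``either $(x_n)$ has a weakly convergent subsequence or it admits a subsequence equivalent to a disjoint sequence plus an order bounded perturbation'' --- is not available: a disjoint weakly compact set need not be relatively weakly compact (the unit ball of $C[0,1]$ is disjoint weakly compact, since bounded disjoint sequences there are weakly null, but it is far from weakly compact), so no Kadec--Pe\l czy\'nski extraction based on weak compactness can be run. Moreover, the disjointification lemma you invoke does not produce $\|x_n-y_n\|\to 0$ along a subsequence; that assertion is simply false in general. The correct tool is the Dodds--Fremlin uniform estimate \cite[Theorem 4.36]{Ali1}: if $A$ is norm bounded and solid (replace $A$ by ${\rm Sol}(A)$, which is still disjoint weakly compact) and $\|Tx_n\|\to0$ for every disjoint sequence in $A$ --- which is exactly what disjoint weak compactness plus the almost Dunford--Pettis property of $T$ gives --- then for each $\varepsilon>0$ there is $u\in E^+$ with $\|T\bigl((|x|-u)^+\bigr)\|\le\varepsilon$ for all $x\in A$. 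Writing $x^{\pm}=x^{\pm}\wedge u+(x^{\pm}-u)^+$ then places $T(A)$ inside $T[-u,u]+2\varepsilon B_X$ (say, up to a constant), and $AM$-compactness of $T$ makes $T[-u,u]$ relatively compact, so $T(A)$ is totally bounded. Your sketch has the right two ingredients (kill the disjoint part by almost Dunford--Pettis, the order bounded part by $AM$-compactness), but the decomposition you describe is not the one that exists, and as written the step would fail. A smaller blemish: your opening ``structural fact'' first asserts that disjoint order bounded sequences are \emph{norm} null, which is false (take the unit vectors in $\ell_\infty$); only the weak-null version you give afterwards is correct.
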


The characterizations above shall be used henceforth without further notice. Next we shall see that none of the implications of the adjoint problem for $DW$-compact operators holds true in general.

\begin{example}\rm Recall that a Banach space has the Schur property if weakly null sequences are norm null. Let us show that, for a Banach lattice $E$,
$$ E \mbox{ has the Schur property} \Longleftrightarrow {\rm id_E} \mbox{ is a $DW$-compact operator}. $$
Suppose that $E$ has the Schur property. Then, obviously, ${\rm id_E}$ is a Dunford-Pettis operator and $E$ has the positive Schur property. By \cite[Theorem and Lemma 1]{Wnuk2}, $E$ has order continuous norm, thus every Dunford-Pettis operator from $E$ to any Banach space is $AM$-compact by \cite[Theorem 2.4.2(vi)]{MeyerPeter}. So, ${\rm id_E}$ is a $DW$-compact operator. The converse follows immediately from Theorem \ref{teo.2.5.dw}.

Letting $E$ be an infinite dimensional Banach lattice with the Schur property (for instance, $\ell_1$), it is well known that $E^*$ fails the Schur property (see, e.g, \cite[Corollary 11]{mujicaarkiv}). Therefore, ${\rm id}_E$ is a $DW$-compact operator whose adjoint $ ({\rm id}_E)^*= {\rm id}_{E^*}$ fails to be $DW$-compact. Starting with an infinite dimensional Banach lattice $E$ whose dual $E^*$ has the Schur property (for instance, $c_0$), the same argument shows that $({\rm id}_E)^* = {\rm id}_{E^*}$ is $DW$-compact whereas ${\rm id}_E$ is not.
\end{example}

Sometimes the duality of $DW$-compact operators holds by collapsing to the duality of compact operators:

\begin{example}\rm Let $E$ and $F$ be Banach lattices so that $E^*$ and $F^{**}$ have order continuous norms and let $T \colon E \longrightarrow F$ be an operator. Combining Schauder's Theorem \cite[Theorem 7.2.7]{livro} with \cite[Corollary 2.2]{Jin}, and using the obvious fact that compact operators are $DW$-compact, we get
$$T \mbox{ is $DW$-compact} \Longleftrightarrow T \mbox{ is compact} \Longleftrightarrow T^* \mbox{ is compact} \Longleftrightarrow T^* \mbox{ is $DW$-compact}.$$
\end{example}

So, for each of the implications of the adjoint problem for $DW$-compact operators, our job is to find conditions on $E$ and $F$, not more restrictive than $E^*$ and $F^{**}$ having order continuous norms, so that the implication holds.

Our first result establishes a sufficient condition and a necessary condition under which a positive or, equivalently, a regular operator is $DW$-compact whenever its adjoint is $DW$-compact.

\begin{theorem}\label{Teo.3} Consider the following assertions about the Banach lattices $E$ and $F$.
\vspace*{-0.4em}
\begin{enumerate}
\item[\rm(i)] $F$ has the positive Schur property.
\vspace*{-0.4em}
\item[\rm(ii)] A regular operator $T\colon E\longrightarrow F$ is $DW$-compact whenever $T^*$ is $AM$-compact.
\vspace*{-0.4em}
\item[\rm(iii)] A regular operator $T\colon E\longrightarrow F$ is $DW$-compact whenever $T^*$ is $DW$-compact.
\vspace*{-0.4em}
\item[\rm(iv)] A positive operator $T\colon E\longrightarrow F$ is $DW$-compact whenever $T^*$ is $DW$-compact.
\vspace*{-0.4em}
\item[\rm(v)] One of the following alternatives holds:
\begin{enumerate}
\vspace*{-0.5em}
\item[\rm(a)] $F$ has order continuous norm.
\vspace*{-0.3em}
\item[\rm(b)] $E^*$ is discrete.
\vspace*{-0.3em}
\item[\rm(c)] The norm of $E^*$ is not order continuous and $F^{**}$ is not  discrete.
\end{enumerate}
\end{enumerate}
\vspace*{-0.4em}
Then {\rm(i)} $\!\Longrightarrow$ \!{\rm(ii)} $\!\Longrightarrow$ {\!\rm(iii)} $\!\Longrightarrow$ \!{\rm(iv)} $\!\Longrightarrow$ \!{\rm(v)}.
\end{theorem}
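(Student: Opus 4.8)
The plan is to establish the four implications one at a time; the first two are immediate, (i)$\Rightarrow$(ii) assembles two standard ingredients, and the real content lies in (iv)$\Rightarrow$(v).

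There is nothing to prove for (iii)$\Rightarrow$(iv), since every positive operator is regular. For (ii)$\Rightarrow$(iii): if $T^*$ is $DW$-compact then it is in particular $AM$-compact by Theorem~\ref{teo.2.5.dw}, so the hypothesis of (ii) is met and $T$ is $DW$-compact.

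For (i)$\Rightarrow$(ii), suppose $F$ has the positive Schur property and let $T\colon E\longrightarrow F$ be regular with $T^*$ $AM$-compact; I would prove that $T$ is almost Dunford--Pettis and $AM$-compact and then invoke Theorem~\ref{teo.2.5.dw}. First, the positive Schur property prevents $c_0$ from embedding in $F$ as a sublattice (its positive unit vectors would form a disjoint positive weakly null sequence that is not norm null), and together with order continuity of the norm this makes $F$ a $KB$-space. Next, $T$ is almost Dunford--Pettis: one reduces to $T\ge 0$, and then for a disjoint weakly null sequence $(x_n)$ in $E$ one has $\|Tx_n\|\le\|T|x_n|\|$, so it suffices that $(|x_n|)$ be weakly null. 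This last fact is a short Mazur argument: given $\varphi\in E^*_+$, choose a convex combination $z=\sum_j\lambda_j x_{n_j}$ with $\|z\|$ as small as we like; since the $x_{n_j}$ are pairwise disjoint, $z^+=\sum_j\lambda_j x_{n_j}^+$, so $\sum_j\lambda_j\varphi(x_{n_j}^+)=\varphi(z^+)\le\|\varphi\|\,\|z\|$, which forces $\varphi(x_n^+)\to 0$, and likewise $\varphi(x_n^-)\to 0$. Hence $(T|x_n|)$ is weakly null in $F$ with $T|x_n|\ge 0$, so $\|T|x_n|\|\to 0$ by the positive Schur property. Finally, since $F$ has order continuous norm (indeed is a $KB$-space), the $AM$-compactness of $T^*$ yields that of $T$ by the duality theory for $AM$-compact operators (the circle of results around \cite[Theorem 125.6]{zaanen}; see also \cite{Aqzzouz4}). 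Thus $T$ is almost Dunford--Pettis and $AM$-compact, hence $DW$-compact.

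For (iv)$\Rightarrow$(v), I would argue by contraposition: assuming (v) fails, construct a positive operator $T\colon E\longrightarrow F$ whose adjoint is $DW$-compact but which itself is not. Failure of (v) means that $F$ does not have order continuous norm, that $E^*$ is not discrete, and that in addition $E^*$ has order continuous norm \emph{or} $F^{**}$ is discrete. From "$F$ not order continuous'' one extracts a lattice copy of $c_0$ in $F$, i.e.\ $0\le u\in F$ and a disjoint sequence $(y_n)$ with $0\le y_n\le u$ and $\|y_n\|\ge 1$; since $\|y_n-y_m\|\ge 1$ the set $\{y_n:n\in\mathbb N\}$ is not relatively compact, and $T$ will be built so that this set lies in the image under $T$ of an order interval of $E$, so that $T$ is not $AM$-compact, hence not $DW$-compact. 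From "$E^*$ not discrete'' it follows that $E$ itself is not discrete with order continuous norm (that would force $E^*$ to be discrete), so $E$ has non-trivial order intervals too: a disjoint order bounded sequence that is not norm null when $E$ is not order continuous, or, by the structure of order continuous non-discrete Banach lattices, a lattice copy of $L_1[0,1]$ when $E$ is order continuous. In each branch one manufactures a positive $T\colon E\longrightarrow F$ taking a suitable order interval of $E$ onto (a copy of) $\{y_n\}$, so that $T$ is not $AM$-compact, while using the remaining hypothesis ($E^*$ order continuous, respectively $F^{**}$ discrete) precisely to make $T^*$ simultaneously Dunford--Pettis and $AM$-compact; the "bad operators'' from the adjoint problems for $AM$-compact operators (\cite{zaanen,Aqzzouz4}) and for almost Dunford--Pettis operators (\cite{Aqzzouz3}) serve as templates.

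I expect this last construction to be the main obstacle. Since $DW$-compactness is the conjunction of $AM$-compactness and the Dunford--Pettis property, the delicate point is to make $T$ defective in exactly the component that the failure of (v) makes available (so that $T$ is not $DW$-compact), while guaranteeing that $T^*$ is \emph{both} Dunford--Pettis and $AM$-compact — typically by arranging $T^*$ to be compact. Matching the case split in (v) to the structure actually available in $E$, $F$, $E^*$ and $F^{**}$ is where the five stated conditions are consumed.
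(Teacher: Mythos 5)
Your implications (i)$\Rightarrow$(ii)$\Rightarrow$(iii)$\Rightarrow$(iv) are correct and essentially match the paper: the paper also deduces $AM$-compactness of $T$ from that of $T^*$ via the duality theory for $AM$-compact operators (citing \cite[Theorem 2.1]{Aqzzouz4}, which needs the order continuity of the norm of $F$ that the positive Schur property supplies), and simply asserts the standard fact that regular operators into a lattice with the positive Schur property are almost Dunford--Pettis, which you prove in detail with the Mazur argument for $(|x_n|)_n$. That part is fine, if more laborious than necessary.

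The implication (iv)$\Rightarrow$(v) is where the proposal has a genuine gap, and not only because the construction is left as a plan rather than carried out. The mechanism you propose is self-defeating: you want a single positive operator $T$ that fails to be $AM$-compact while $T^*$ is $DW$-compact, ``typically by arranging $T^*$ to be compact.'' But if $T^*$ is compact then $T$ is compact by Schauder's theorem, hence $AM$-compact and Dunford--Pettis, hence $DW$-compact --- exactly what you are trying to avoid. So no operator of the kind your sketch aims for can exist, and the case analysis you outline (lattice copies of $c_0$ in $F$, copies of $L_1[0,1]$ in $E$, etc.) cannot be completed along those lines. The missing idea is \emph{domination}: the paper invokes the proof of the domination theorem for $DW$-compact operators \cite[Theorem 2.12]{Jin}, which, from the failure of (a) and (b), produces a pair $0\leq S\leq T\colon E\longrightarrow F$ with $T$ compact and $S$ not $AM$-compact. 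Then $T^*$ is compact by Schauder, $0\leq S^*\leq T^*$, and the failure of (c) (i.e.\ $E^*$ has order continuous norm or $F^{**}$ is discrete) is precisely the hypothesis needed to apply the same domination theorem to the dual pair $S^*\leq T^*\colon F^*\longrightarrow E^*$ and conclude that $S^*$ is $DW$-compact. Since $S$ is not $AM$-compact, hence not $DW$-compact, this contradicts (iv). The counterexample operator $S$ has a $DW$-compact but \emph{non-compact} adjoint, which is obtained only indirectly through domination by a compact operator --- this is the step your single-operator plan cannot reproduce.
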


\begin{proof} {\rm(i)} $\!\Longrightarrow$ \!{\rm(ii)} As mentioned before, the positive Schur property of $F$ assures that its norm is order continuous. Let $T\colon E\longrightarrow F$ be a regular operator for which $T^*$ is $AM$-compact.  By \cite[Theorem 2.1]{Aqzzouz4}, $T$ is $AM$-compact. The regularity of $T$ and the positive Schur property of $F$ guarantee that $T$ is almost Dunford-Pettis, hence $T$ is $DW$-compact.

\medskip

\noindent {\rm(ii)} $\!\Longrightarrow$ \!{\rm(iii)} $\!\Longrightarrow$ \!{\rm(iv)} are obvious.

\medskip

\noindent{\rm(iv)} $\!\Longrightarrow$ \!{\rm(v)} Assume, for the sake of contradiction, that (a), (b) and (c) do not hold. As (a) and (b) fail, the proof of \cite[Theorem 2.12]{Jin} yields the existence of operators $S,T\colon E\longrightarrow F$ such that $0\leq S\leq T$, $T$ is compact and $S$ is not $AM$-compact. Then $T^*$ is compact, hence $DW$-compact, by Schauder's Theorem and $S$ is not $DW$-compact. Since (c) does not hold, $E^*$ has order continuous norm or $F^{**}$ is discrete. As $T^*$ is compact and $0\leq S^*\leq T^*$, $S^*$ is $DW$-compact by \cite[Theorem 2.12]{Jin}. Condition (iv) gives that $S$ is $DW$-compact, a contradiction that completes the proof.
\end{proof}

Now we turn to the other implication of the adjoint problem for $DW$-compact operators. The following lemma is certainly known to experts, we give a short proof for the benefit of the reader.

\begin{lemma}\label{lemafim}
Let $E$ be a Banach lattice. For every nonzero functional $x^*\in E^*$, there is a positive vector $x\in E$ such that $\|x\|\leq1$ and $|x^*(x)|\geq\tfrac{1}{4}\|x^*\|.$
\end{lemma}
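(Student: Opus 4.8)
The plan is to reduce to positive functionals through the decomposition $x^* = (x^*)^+ - (x^*)^-$ and then use the Riesz--Kantorovich formulas to convert information about the modulus $|x^*|$ into an actual value of $x^*$ at a positive vector of norm at most $1$.

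First I would use that the norm of $E^*$ is a lattice norm, so $\|x^*\| = \| |x^*| \|$, together with the fact that for the positive functional $|x^*|$ one has $\| |x^*| \| = \sup\{|x^*|(y) : y \in E,\ y \geq 0,\ \|y\| \leq 1\}$, because $|x^*(x)| \leq |x^*|(|x|)$ and $\| |x| \| = \|x\|$ for every $x \in E$. Since $x^*\neq 0$, this lets me fix a positive $y \in E$ with $\|y\| \leq 1$ and $|x^*|(y) > \tfrac{3}{4}\|x^*\|$. From $|x^*| = (x^*)^+ + (x^*)^-$, at least one of $(x^*)^+(y)$ and $(x^*)^-(y)$ exceeds $\tfrac{3}{8}\|x^*\|$.

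Next I would invoke the Riesz--Kantorovich formula: for $y \geq 0$ one has $(x^*)^+(y) = \sup\{x^*(z) : 0 \leq z \leq y\}$, and, using $(x^*)^- = (-x^*)^+$, also $(x^*)^-(y) = \sup\{-x^*(z) : 0 \leq z \leq y\}$. If $(x^*)^+(y) > \tfrac{3}{8}\|x^*\|$, I pick $z$ with $0 \leq z \leq y$ and $x^*(z) > \tfrac{3}{8}\|x^*\| - \tfrac{1}{8}\|x^*\| = \tfrac{1}{4}\|x^*\|$; if instead $(x^*)^-(y) > \tfrac{3}{8}\|x^*\|$, I pick $z$ with $0 \leq z \leq y$ and $-x^*(z) > \tfrac{1}{4}\|x^*\|$. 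In either case $x := z$ is positive, satisfies $\|x\| \leq \|y\| \leq 1$ by monotonicity of the lattice norm (since $0 \leq z \leq y$), and $|x^*(x)| > \tfrac{1}{4}\|x^*\|$, which yields the claim.

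No serious obstacle is expected; the argument is purely order-theoretic and uses only the lattice structure of $E^*$ and the Riesz--Kantorovich identities. The one point requiring a little care is the bookkeeping with the approximation slack in the two suprema: splitting $\tfrac{3}{4}$ as $\tfrac{3}{8}+\tfrac{3}{8}$ and then spending $\tfrac{1}{8}$ on the Riesz--Kantorovich approximation leaves exactly $\tfrac{1}{4}$, so the stated constant is attained with room to spare (in fact the same scheme gives any constant below $1$).
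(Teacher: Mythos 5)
Your proof is correct, but it takes a genuinely different route from the paper's. You decompose the \emph{functional} as $x^*=(x^*)^+-(x^*)^-$, pass to $|x^*|$ via $\||x^*|\|=\|x^*\|$ and the fact that positive functionals compute their norm on the positive unit ball, and then use the Riesz--Kantorovich formula to pull a positive vector $z$ with $0\le z\le y$ back out of the value $(x^*)^{\pm}(y)$. The paper instead decomposes the \emph{vector}: it argues by contradiction, assuming $|x^*(x)|<\tfrac14\|x^*\|$ for all positive $x$ in the unit ball, and then for arbitrary $y$ with $\|y\|\le 1$ writes $x^*(y)=x^*(y^+)-x^*(y^-)$ to conclude $\|x^*\|\le\tfrac12\|x^*\|$. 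The paper's argument is more elementary --- it needs only the triangle inequality and $\|y^{\pm}\|\le\|y\|$, with no appeal to the lattice structure of $E^*$ or to Riesz--Kantorovich --- while yours is direct rather than by contradiction and makes the mechanism (where the positive vector comes from) explicit. One small correction to your closing remark: your scheme, like the paper's, yields every constant strictly below $\tfrac12$, not every constant below $1$; indeed $\tfrac12$ is optimal, as the functional $x^*=(1,-1)$ on $\mathbb{R}^2$ with the supremum norm shows, since there $|x^*(x)|\le 1=\tfrac12\|x^*\|$ for all positive $x$ in the unit ball. The slack bookkeeping in your argument ($\tfrac34=\tfrac38+\tfrac38$, then spending $\tfrac18$ on the supremum) caps out at $\tfrac12$ because the mass of $|x^*|(y)$ splits between $(x^*)^+(y)$ and $(x^*)^-(y)$ and you can only control the larger of the two.
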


\begin{proof} Suppose that $|x^*(x)|<\tfrac{1}{4}\|x^*\|$ for every $x\geq0$ with $\|x\|\leq1$. In this case, for any $y\in E$ with $\|y\|\leq1$, using that $\|y^+\| \leq \|y\| \leq 1$ and   $\|y^-\| \leq \|y\| \leq 1$, we would have \[|x^*(y)|=|x^*(y^+)-x^*(y^-)|\leq|x^*(y^+)|+|x^*(y^-)|<\frac{1}{4}\|x^*\|+\frac{1}{4}\|x^*\|=\frac{1}{2}\|x^*\|.\] Taking the supremum over $y \in E$ with $\|y\|\leq 1$, we would get 
$0 < \|x^*\|\leq\tfrac{1}{2}\|x^*\|$. 
\end{proof}

Next we establish necessary and sufficient conditions on a Banach lattice $F$ so that the adjoint of every bounded operator from any Banach space to $F$ is $DW$-compact.

\begin{theorem}\label{teo.2} The following are equivalent for a Banach lattice $F$.
\vspace*{-0.4em}
\begin{enumerate}
\item[\rm(i)] $F^*$ is discrete and has the positive Schur property (or, equivalently, the Schur property).

\vspace*{-0.4em}
\item[\rm(ii)] The adjoint of every bounded operator from any Banach space to $F$ is $DW$-compact.

 \vspace*{-0.4em}
\item[\rm(iii)] The adjoint of every bounded operator from any Banach lattice to $F$ is $DW$-compact.

\vspace*{-0.4em}
\item[\rm(iv)] The adjoint of every bounded operator from any Banach lattice $E$ such that the norm of $E^*$ is not order continuous to $F$ is $DW$-compact. 

 \vspace*{-0.4em}
\item[\rm(v)] The adjoint of every regular operator from any Banach lattice $E$ such that the norm of $E^*$ is not order continuous to $F$ is $DW$-compact.

\vspace*{-0.4em}
\item[\rm(vi)] The adjoint of every positive operator from any Banach lattice $E$ such that the norm of $E^*$ is not order continuous to $F$ is $DW$-compact. 

\vspace*{-0.4em}
\item[\rm(vii)] The adjoint of every positive $DW$-compact operator from $\ell_1$ to $F$ is $DW$-compact. 

\vspace*{-0.4em}
\item[\rm(viii)] There exists a Banach lattice $E$ so that the norm of $E^*$ is not order continuous and the adjoint of every positive $DW$-compact operator from $E$ to $F$ is $DW$-compact. 
\end{enumerate}
\end{theorem}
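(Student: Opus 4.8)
The plan is to close the cycle (i)$\,\Rightarrow\,$(ii)$\,\Rightarrow\,$(iii)$\,\Rightarrow\,$(iv)$\,\Rightarrow\,$(v)$\,\Rightarrow\,$(vi)$\,\Rightarrow\,$(vii)$\,\Rightarrow\,$(viii)$\,\Rightarrow\,$(i); all the consecutive implications from (ii) down to (viii) are immediate, and the two end implications carry the weight. For (i)$\,\Rightarrow\,$(ii) I would use the classical fact that a Banach lattice is discrete with order continuous norm exactly when all its order intervals are norm compact: since the Schur property forces order continuity of the norm, the hypothesis on $F^*$ makes \emph{every} bounded operator defined on $F^*$ both $AM$-compact and (again by the Schur property of $F^*$) Dunford-Pettis, so for any bounded $T\colon X\to F$ the adjoint $T^*\colon F^*\to X^*$ is $DW$-compact by Theorem~\ref{teo.2.5.dw}. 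I would also record the parenthetical claim of (i), that a discrete Banach lattice has the positive Schur property iff it has the Schur property, by a gliding-hump reduction of weakly null sequences to disjoint ones (or by citing the survey literature on Schur-type properties). The steps (ii)$\,\Rightarrow\cdots\Rightarrow\,$(vi) only weaken the domain class or the operator class; for (vi)$\,\Rightarrow\,$(vii) one notes $\ell_1^*=\ell_\infty$ is not order continuous and a positive $DW$-compact operator from $\ell_1$ is a positive operator from such a lattice; (vii)$\,\Rightarrow\,$(viii) is the choice $E=\ell_1$.

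The substance is (viii)$\,\Rightarrow\,$(i), which I would prove contrapositively: assuming $F^*$ is \emph{not} discrete or \emph{fails} the positive Schur property, I will show that for \emph{every} Banach lattice $E$ with $E^*$ not order continuous there is a positive $DW$-compact $T\colon E\to F$ whose adjoint is not $DW$-compact, so no such $E$ can witness (viii). Fix such an $E$. Being Dedekind complete and not order continuous, $E^*$ contains a lattice copy of $c_0$, that is, a disjoint sequence $(x_n^*)$ in the unit sphere of $(E^*)^+$ with $M:=\sup_N\|\sum_{n\le N}x_n^*\|<\infty$. By Lemma~\ref{lemafim} choose $x_n\in E^+$ with $\|x_n\|\le 1$ and $x_n^*(x_n)\ge\tfrac14$. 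Then $R\colon E\to\ell_1$, $Rx=(x_n^*(x))_n$, is a well-defined positive operator with $\|R\|\le M$, and for $x_0\in E^+$ one has $R([-x_0,x_0])\subseteq[-w_0,w_0]$ with $w_0=(x_n^*(x_0))_n\in\ell_1$; since order intervals of $\ell_1$ are norm compact, $R$ sends order intervals of $E$ to relatively compact sets. Consequently, for \emph{any} bounded positive $J\colon\ell_1\to F$, the composition $T:=J\circ R$ is positive, $AM$-compact, and Dunford-Pettis (because $\ell_1$ has the Schur property, so $J$, hence $J\circ R$, is Dunford-Pettis), so $T$ is $DW$-compact; the remaining freedom lies in the choice of $J$.

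If $F^*$ lacks the positive Schur property, take a positive weakly null sequence $(f_n^*)$ in the unit sphere of $F^*$ and, by Lemma~\ref{lemafim}, vectors $y_n\in F^+$ with $\|y_n\|\le 1$ and $f_n^*(y_n)\ge\tfrac14$; set $Je_n:=y_n$. Since all the terms below are nonnegative, $\|T^*f_n^*\|\ge f_n^*(Tx_n)=\sum_m x_m^*(x_n)f_n^*(y_m)\ge x_n^*(x_n)f_n^*(y_n)\ge\tfrac1{16}$ for every $n$, while $(f_n^*)$ is weakly null; hence $T^*$ is not Dunford-Pettis, so not $DW$-compact. If instead $F^*$ has the positive Schur property — hence order continuous norm, because order-bounded disjoint sequences are always weakly null — but is not discrete, then $F^*$ has an order interval $[0,\phi]$ that is weakly compact but not norm compact, producing a weakly null sequence $(h_n^*)$ with $|h_n^*|\le 2\phi$ and $\|h_n^*\|\ge\varepsilon>0$; now one must choose $J$ so that $T^*$ is not $AM$-compact, i.e.\ so that $\{g^*\circ T:|g^*|\le\phi\}$ is not relatively compact in $E^*$. \textbf{This last case is the main obstacle.} Unlike the positive $f_n^*$ above, the $h_n^*$ are not positive, so the cross-terms $\sum_{m\ne n}x_m^*(x_n)h_n^*(y_m)$ no longer carry a favorable sign; my plan is to choose the $y_m$ (equivalently, to insert a diagonal operator with rapidly decreasing weights between $R$ and $J$, passing to subsequences to approximately biorthogonalize the $h_n^*$ against the $x_n$) so that $\|h_n^*\circ T\|$ stays bounded away from $0$, whereupon $(h_n^*\circ T)$ has no norm-convergent subsequence and the proof is complete. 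Should it turn out that every dual Banach lattice with the positive Schur property is automatically discrete, this second case is vacuous and the first case alone suffices; I would check this point against the literature before writing the final argument.
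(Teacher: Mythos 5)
Your architecture coincides with the paper's (same cycle, same easy implications, same contrapositive strategy for (viii)$\Rightarrow$(i), same operator $R\colon E\to\ell_1$ built from a normalized positive disjoint order-bounded sequence in $E^*$, same factorization through $\ell_1$), and your first case (failure of the positive Schur property of $F^*$) is correct. The genuine gap is exactly the case you flagged: $F^*$ with the positive Schur property but not discrete. This case is \emph{not} vacuous, so your fallback fails: for $F=C[0,1]$ the dual $F^*=M[0,1]$ is an $AL$-space, hence has the positive Schur property (for positive $\mu_n$ weakly null, $\|\mu_n\|=\mu_n([0,1])=\langle \mu_n,\mathbf 1\rangle\to0$), yet it is not discrete since it contains the non-atomic band generated by Lebesgue measure. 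So an actual argument is required, and your ``approximate biorthogonalization'' plan is both vague and aimed at the wrong quantity: the cross-term problem you describe arises only because you estimate $\|T^*h_n^*\|$ by testing against the vectors $x_n\in E$. The paper never uses such test vectors. It exploits the disjointness of the positive sequence $(x_m^*)$ to get the lattice inequality $|T^*w^*|\ \geq\ |w^*(y_k)|\,x_k^*$ for every $k$ (the $k$-th ``coordinate'' of $T^*w^*$ along the disjoint system survives taking moduli), whence $\|T^*w^*\|\geq|w^*(y_k)|\,\|x_k^*\|=|w^*(y_k)|$ with no cross-terms at all. The paper then produces the order-bounded non-precompact test sequence in $F^*$ via the Chen--Wickstead Rademacher theorem: non-discreteness of $F^*$ yields a weak$^*$-null sequence $(z_n^*)$ with $|z_n^*|=z^*>0$ constant, Lemma \ref{lemafim} gives $z_n\in F^+$ with $|z_n^*(z_n)|\geq\tfrac14\|z^*\|$, one sets $Je_n=z_n$, and then $\|T^*z_k^*\|\geq|z_k^*(z_k)|\geq\tfrac14\|z^*\|$ while $(T^*z_n^*)$ is weak$^*$-null, so it has no convergent subsequence and $T^*$ is not $AM$-compact on the order interval $[-z^*,z^*]$.

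That said, your own setup is very close to a complete proof of this case without Chen--Wickstead, and I would encourage you to finish it along these lines: since $F^*$ has order continuous norm but is not discrete, some order interval $[0,\phi]$ is weakly compact but not norm compact; by Eberlein--\v Smulian extract $g_n\to g$ weakly in $[0,\phi]$ with no norm-convergent subsequence, set $h_n^*:=g_n-g$, pass to a subsequence with $\|h_n^*\|\geq\varepsilon$, and use Lemma \ref{lemafim} to pick $y_n\in F^+$, $\|y_n\|\leq1$, with $|h_n^*(y_n)|\geq\varepsilon/4$; defining $Je_n=y_n$ and $T=J\circ R$, the disjointness inequality above gives $\|T^*h_n^*\|\geq|h_n^*(y_n)|\geq\varepsilon/4$, while $(T^*h_n^*)$ is weakly null (so no subsequence converges in norm) and $(h_n^*)$ lies in the order interval $[-\phi,\phi]$; hence $T^*$ is not $AM$-compact. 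Either way, as written your proposal does not yet prove (viii)$\Rightarrow$(i).
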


\begin{proof} {\rm(i)} $\!\Longrightarrow$ \!{\rm(ii)} For the coincidence of the positive Schur property and the Schur property on discrete Banach lattices, see \cite[Remark p.\,171]{Wnuk2}. Let $X$ be a Banach space and let $T \colon X \longrightarrow F$ be a bounded operator. As mentioned before, the norm of every Banach lattice having the positive Schur property is order  continuous. Thus, $F^*$ is a discrete Banach lattice with order continuous norm. From \cite[Theorem 6.1]{Wnuk99} it follows that all order intervals of $F^*$ are norm compact. Therefore, every continuous operator from $F^*$ to any Banach space is $AM$-compact, in particular, $T^*$ is $AM$-compact. Since $F^*$ has the positive Schur property and $T^*$ is $AM$-compact, $T^*$ is Dunford-Pettis by \cite[Theorem 3.2]{Moussa}, hence $T^*$ is $DW$-compact.

The implications {\rm(ii)} $\!\Longrightarrow$ \!{\rm(iii)} $\!\Longrightarrow$ \!{\rm(iv)} $\!\Longrightarrow$ \!{\rm(v)} $\!\Longrightarrow$ \!{\rm(vi)} are straightforward, and {\rm(vii)} $\!\Longrightarrow$ \!{\rm(viii)} holds because the norm of $\ell_1^* = \ell_\infty$ is not order continuous.

{\rm(viii)} $\!\Longrightarrow$ \!{\rm(i)} Fix a Banach lattice $E$ so that the norm of $E^*$ is not order continous. By \cite[Theorem 2.4.2]{MeyerPeter}(iv) there exist a positive disjoint sequence $(x^*_n)_n$ in $E^*$ and a functional $x^*\in E^*$ such that $\|x^*_n\|=1$ and $0\leq x^*_n\leq x^*$ for every $n\in \N$. For all $m\in\N$ and $x\in E$, \[\sum_{n=1}^m|x^*_n(x)|\leq\sum_{n=1}^mx^*_n(|x|)=\left(\bigvee_{n=1}^m x^*_n\right)(|x|)\leq x^*(|x|)<\infty,\]
therefore the map
 \[S\colon E\longrightarrow\ell_1~,~ S(x)=(x^*_n(x))_n,\]
  is a well defined positive operator, in particular $S$ is order bounded. The Schur property of $\ell_1$ \cite[Theorem 6.2.12]{livro} guarantees that the operator $S$ is Dunford-Pettis; and it is $AM$-compact because it is order bounded and order intervals in $\ell_1$ are norm compact. 
  Thus, $S$ is $DW$-compact.

Suppose that $F^*$ fails the positive Schur property. 
In this case, there exists a normalized positive disjoint weakly null sequence $(y^*_n)_n$ in $F^*$, see, e.g., \cite[Proposition 2.1]{Aqzzouz3}. 
Since each $y^*_n$ is a positive linear functional, we have 
$$1 = \|y^*_n\| = \sup\{|y^*_n(y)| : y\geq 0, \|y\| \leq 1\}$$
(see \cite[Proposition 1.3.5]{MeyerPeter}). 
So, for each $n$ we can pick a positive vector $y_n\in F$ such that $\|y_n\|=1$ and $y^*_n(y_n)\geq\frac{1}{2}$. It is clear that the map 
\[R\colon\ell_1\longrightarrow F~,~ R((a_n)_n)=\sum_{n=1}^\infty a_ny_n,\] is a well defined positive operator, hence continuous. $R$ is a Dunford-Pettis operator because $\ell_1$ has the Schur property. Since order  intervals in  $\ell_1$ are norm compact and $R$ is continuous, $R$ is $AM$-compact; therefore $R$ is $DW$-compact. Defining $T:=R\circ S\colon E\longrightarrow F$, we have $T(x)=\sum\limits_{n=1}^\infty x^*_n(x)y_n$ for every $x\in E$. Note that, for being the composition of $AM$-compact operators, $T$ is $AM$-compact as well. Using the weak-to-weak continuity of $S$, the Schur property of $\ell_1$ and the continuity of $F$ it follows that $T$ is a Dunford-Pettis operator. 
Therefore, $T$ is $DW$-compact. 
Let us see that its adjoint \[T^*\colon F^*\longrightarrow E^*~,~  T^*(w^*)=\sum_{n=1}^\infty w^*(y_n)x^*_n,\] is not almost Dunford-Pettis. Indeed, for each $k\in\N$,
\[T^*(y^*_k)=\sum_{n=1}^\infty y^*_k(y_n)x^*_n\geq y^*_k(y_k)x^*_k\geq 0,\] from which it follows that 
$$\|T^*(y^*_k)\|\geq \|y^*_k(y_k)x^*_k\|=y^*_k(y_k)\geq\frac{1}{2} \mbox{ for every } k \in \mathbb{N}.$$ As $(y_k^*)_k$ is a disjoint positive weakly null sequence in $F^*$, we have that $T^*$ is not almost Dunford-Pettis, hence $T^*$ is not $DW$-compact. This contradicts (viii), so $F^*$ has the positive Schur property.

Suppose now that $F^*$ is not discrete. By \cite[Theorem 3.1]{Chen} there exist a weak$^*$-null sequence $(z^*_n)_n$ in $F^*$ and $z^*\in F^*$ so that $|z^*_n|=z^*>0$ for every $n$. For each $n$ Lemma \ref{lemafim} gives a vector $z_n\in F^+$ such that  $\|z_n\|\leq1$ and $|z^*_n(z_n)|\geq\tfrac{1}{4}\|z^*_n\|=\tfrac{1}{4}\|z^*\|$. It is plain that \[U\colon \ell_1\longrightarrow F~,~ U((a_n)_n)=\sum_{n=1}^\infty a_nz_n,\] is a well defined positive operator. Moreover, $U$ is $AM$-compact because it is continuous and order intervals in $\ell_1$ are compact; and $U$ is Dunford-Pettis due to the Schur property of $\ell_1$; hence 
$U$ is $DW$-compact As we did  before, the positive operator \[T:=U\circ S\colon E\longrightarrow F~,~ T(x)=\sum_{n=1}^\infty x^*_n(x)z_n,\] is $DW$-compact. Let us see that its adjoint
\[T^*\colon F^*\longrightarrow E^*~,~ T^*(w^*)=\sum_{n=1}^\infty w^*(z_n)x^*_n,\] fails to be $AM$-compact. Since $(z^*_n)_n$ is weak$^*$-null in $F^*$ and adjoint operators are  weak$^*$-weak$^*$ continuous, the sequence $(T^*(z^*_n))_n$ is weak$^*$-null in $E^*$. Using that $(x^*_n)_n$ is a positive disjoint sequence in  $E^*$ and the continuity of the lattice operations, we get
\begin{align*}|T^*(w^*)|&=\left|\sum_{n=1}^\infty w^*(z_n)x^*_n\right|= \left|\lim_m\sum_{n=1}^m w^*(z_n)x^*_n\right| = \lim_m\left|\sum_{n=1}^m w^*(z_n)x^*_n\right|\\&
= \lim_m \sum_{n=1}^m |w^*(z_n)|\,x^*_n =\sum_{n=1}^\infty |w^*(z_n)|\,x^*_n 
\geq|w^*(z_k)|\,x^*_k
\end{align*}
for all $w^*\in F^*$ and $k\in\N$. Hence, for every $k$, \[\|T^*(z^*_k)\|\geq\||z^*_k(z_k)|x_k\|=|z^*_k(z_k)|\geq\tfrac{1}{4}\|z\|>0.\]
In particular, the sequence $(T^*(z^*_n))_n$ has no convergent subsequence in $E^*$. So, the sequence $(z_k^*)_k$ is order bounded (recall that $|z_k^*|= z^*$ for every $k$) and $(T^*(z^*_n))_n$ is not relatively compact. Therefore, $T^*$ is not  $AM$-compact, in particular, $T$ is $DW$-compact and $T^*$ is not. This contadicts (viii), so $F^*$ is discrete.
\end{proof}


The implication of the adjoint problem we are currently working with was treated for positive Dunford-Pettis operators in \cite{Aqzzouz5} and for regular $AM$-compact operators in \cite{Aqzzouz4}. For the class of $DW$-compact operators there is no need to impose any condition on the operators: Next we give a necessary and sufficient condition so that the implication holds for every operator:

\begin{corollary} The following are equivalent for the Banach lattices $E$ and $F$.
\vspace*{-0.4em}
\begin{enumerate}
\item[\rm(i)] The adjoint of every $DW$-compact operator from $E$ to $F$ is $DW$-compact.

 \vspace*{-0.4em}
\item[\rm(ii)] The adjoint of every positive $DW$-compact operator from $E$ to $F$ is $DW$-compact.
    \vspace*{-0.4em}
\item[\rm(iii)] $E^*$ has order continuous norm or $F^*$ is discrete and has the positive Schur property (or, equivalently, the Schur property).
\end{enumerate}
\end{corollary}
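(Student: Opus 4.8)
The plan is to deduce the corollary formally from Theorem~\ref{teo.2}, from \cite[Corollary 2.2]{Jin} and from Schauder's Theorem, without introducing any new construction; essentially everything needed has already been proved.

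The implication {\rm(i)}~$\Longrightarrow$~{\rm(ii)} is immediate, since a positive operator is in particular a bounded operator. For {\rm(ii)}~$\Longrightarrow$~{\rm(iii)} I would argue by cases on whether the norm of $E^*$ is order continuous. If it is, then {\rm(iii)} holds with its first alternative and there is nothing to prove. If it is not, then the given $E$ is a Banach lattice whose dual norm is not order continuous and such that the adjoint of every positive $DW$-compact operator from $E$ to $F$ is $DW$-compact; this is precisely assertion {\rm(viii)} of Theorem~\ref{teo.2} realized by this particular $E$. Applying the implication {\rm(viii)}~$\Longrightarrow$~{\rm(i)} of that theorem then yields that $F^*$ is discrete and has the positive Schur property, i.e.\ the second alternative of {\rm(iii)}.

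For {\rm(iii)}~$\Longrightarrow$~{\rm(i)} I would again split into two cases. If $F^*$ is discrete and has the positive Schur property, then the implication {\rm(i)}~$\Longrightarrow$~{\rm(ii)} of Theorem~\ref{teo.2} already gives that the adjoint of every bounded operator from any Banach space to $F$ is $DW$-compact; specializing to operators defined on $E$, and in particular to the $DW$-compact ones, gives {\rm(i)}. If instead the norm of $E^*$ is order continuous, let $T\colon E\longrightarrow F$ be $DW$-compact. By \cite[Corollary 2.2]{Jin} the operator $T$ is compact, hence $T^*$ is compact by Schauder's Theorem \cite[Theorem 7.2.7]{livro}, and, compact operators being $DW$-compact, $T^*$ is $DW$-compact.

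I do not expect a genuine obstacle: the corollary is an assembling of earlier results, so the subtlety is purely bookkeeping. The two points to get right are (a) recognizing, in {\rm(ii)}~$\Longrightarrow$~{\rm(iii)}, that condition {\rm(ii)} under the extra hypothesis that $E^*$ is not order continuous is literally hypothesis {\rm(viii)} of Theorem~\ref{teo.2} for this $E$; and (b) recalling, as recorded in the proof of Theorem~\ref{teo.2}, that on a discrete Banach lattice the positive Schur property and the Schur property coincide (\cite[Remark p.\,171]{Wnuk2}), which legitimizes the parenthetical clause in {\rm(iii)}.
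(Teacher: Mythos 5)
Your proposal is correct and follows essentially the same route as the paper: (ii)$\Rightarrow$(iii) by feeding the given $E$ (when $E^*$ fails order continuity) into assertion (viii) of Theorem \ref{teo.2}, and (iii)$\Rightarrow$(i) by splitting into the case where $E^*$ has order continuous norm (via \cite[Corollary 2.2]{Jin} and Schauder) and the case where $F^*$ is discrete with the positive Schur property (via Theorem \ref{teo.2}). The only cosmetic difference is that the paper invokes ``the proof of'' the implication (viii)$\Rightarrow$(i), whereas you correctly observe that the statement itself already applies since your $E$ witnesses the existential hypothesis.
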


\begin{proof} {\rm(ii)} $\!\Longrightarrow$ \!{\rm(iii)} If the norm of $E^*$ is not order continuous, the assumption and the proof of Theorem  \ref{teo.2}{\rm(viii)} $\!\Longrightarrow$ \!{\rm(i)} yield that $F^*$ is discrete and has the positive Schur property.

{\rm(iii)} $\!\Longrightarrow$ \!{\rm(i)} If the norm of $E^*$ is order continuous, then every $DW$-compact operator $T \colon E \longrightarrow F$ is compact by \cite[Corollary 2.2]{Jin}, therefore $T^*$ is compact, hence $DW$-compact, by Schauder's Theorem. If $F^*$ is discrete and has the positive Schur property, then the result follows from Theorem \ref{teo.2}.
\end{proof}

\section{$DW$-$DP$ operators}

The purpose of this section is just to start the duality theory of the class of $DW$-$DP$ operators, actually we shall prove only one result in this direction.

Recall that a bounded subset $A$ of a Banach space $X$ is a {\it Dunford-Pettis} set if, regardless of the Banach space $Y$ and the weakly compact operator $u \colon X \longrightarrow Y$, $u(A)$ is relatively compact in $Y$.

According to \cite{Jin2}, an operator from a Banach lattice to a Banach space is a {\it $DW$-$DP$ operator} if it maps disjoint weakly compact sets to Dunford-Pettis sets.

It is easy to see that $DW$-compact operators are $DW$-$DP$. Again, a useful result characterizes the class of $DW$-$DP$ operators as the intersection of two previously studied classes. Let $E$ be a Banach lattice and let $X,Y$ be Banach spaces. \\
$\bullet$ An operator $T \colon E \longrightarrow Y$ is {\it order Dunford-Pettis} if $T$ maps order bounded subsets of $E$ to Dunford-Pettis subsets of $Y$ (see \cite{Aqzzouz9, Bouras}). \\
$\bullet$ An operator $T \colon X \longrightarrow Y$ is {\it weak Dunford-Pettis} if $y_n^*(T(x_n)) \longrightarrow 0$ whenever $(x_n)_n$ and $(y_n^*)_n$  are weakly null sequences in $X$ and $Y^*$, respectively. Or, equivalently, if $T$ sends relatively weakly compact subsets of $X$ to Dunford-Pettis subsets of $Y$ (see, \cite[p.\,349]{Ali1}).

\begin{theorem}\label{nnth} {\rm \cite[Theorem 2.12]{Jin2}} An operator from a Banach lattice to a Banach space is a $DW$-$DP$ operator if and only if it is order Dunford-Pettis and weak Dunford-Pettis.
\end{theorem}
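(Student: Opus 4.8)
The plan is to prove the two implications separately: the forward direction comes straight from the definitions, and the converse reduces to Theorem~\ref{teo.2.5.dw}.

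\textbf{Necessity.} Assuming $T\colon E\longrightarrow Y$ is $DW$-$DP$, I would first record that every order bounded subset of $E$ is disjoint weakly compact. Indeed, if $(y_n)_n$ is a disjoint sequence with $|y_n|\le u$ for all $n$, then for every $0\le\varphi\in E^*$ we have $\sum_{n=1}^{m}\varphi(|y_n|)=\varphi\big(\bigvee_{n=1}^{m}|y_n|\big)\le\varphi(u)$ for all $m$, whence $\varphi(|y_n|)\to 0$; since $|\psi(y_n)|\le|\psi|(|y_n|)$ for all $\psi\in E^*$, the sequence $(y_n)_n$ is weakly null. As the solid hull of an order interval is again an order interval, this shows that order intervals, and hence all order bounded sets, are disjoint weakly compact. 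Consequently $T$ sends order bounded sets to Dunford-Pettis sets, i.e.\ $T$ is order Dunford-Pettis. Since relatively weakly compact subsets of $E$ are disjoint weakly compact \cite[Proposition 2.5.12(iii)]{MeyerPeter}, $T$ also sends relatively weakly compact sets to Dunford-Pettis sets, so $T$ is weak Dunford-Pettis by the equivalent formulation recalled before the statement.

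\textbf{Sufficiency.} Now suppose $T\colon E\longrightarrow Y$ is order Dunford-Pettis and weak Dunford-Pettis, and let $A\subseteq E$ be disjoint weakly compact; I want $T(A)$ to be a Dunford-Pettis set. Fix a Banach space $Z$ and a weakly compact operator $S\colon Y\longrightarrow Z$. By the definition of a Dunford-Pettis set it suffices to show that $(S\circ T)(A)$ is relatively compact, and for this I would prove that $S\circ T\colon E\longrightarrow Z$ is $DW$-compact, after which Theorem~\ref{teo.2.5.dw} applied to $A$ finishes the job. By that same theorem it is enough to check that $S\circ T$ is Dunford-Pettis and $AM$-compact. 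For $AM$-compactness: if $B\subseteq E$ is order bounded, then $T(B)$ is a Dunford-Pettis set in $Y$ (since $T$ is order Dunford-Pettis), and then $S(T(B))$ is relatively compact in $Z$ by the definition of a Dunford-Pettis set. For the Dunford-Pettis property: if $(x_n)_n$ is weakly null in $E$, then $\{x_n:n\in\N\}$ is relatively weakly compact, so $\{T(x_n):n\in\N\}$ is a Dunford-Pettis set in $Y$ by the weak Dunford-Pettis property of $T$, hence $\{S(T(x_n)):n\in\N\}$ is relatively compact in $Z$; since $(S(T(x_n)))_n$ is also weakly null and a weakly null sequence contained in a relatively compact set is norm null, we get $\|S(T(x_n))\|\to 0$. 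Thus $S\circ T$ is $DW$-compact, $(S\circ T)(A)$ is relatively compact, and since $Z$ and $S$ were arbitrary, $T(A)$ is a Dunford-Pettis set; as $A$ was an arbitrary disjoint weakly compact set, $T$ is $DW$-$DP$.

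\textbf{Main difficulty.} There is no serious obstacle once one has the idea of testing the candidate Dunford-Pettis set $T(A)$ against an arbitrary weakly compact operator $S$ and feeding $S\circ T$ into Theorem~\ref{teo.2.5.dw}. The step deserving a little care is showing that $S\circ T$ is Dunford-Pettis: the weak Dunford-Pettis property of $T$ only yields that $T$ sends weakly null sequences to Dunford-Pettis \emph{sets}, and one must then invoke the weak compactness of $S$ (to obtain relative compactness) together with the coincidence of weak and norm convergence on relatively compact sets (to obtain norm-nullity of $(S(T(x_n)))_n$).
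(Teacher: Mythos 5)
Your proposal is correct. Note first that the paper itself offers no proof of this statement: it is imported verbatim as \cite[Theorem 2.12]{Jin2}, so there is no internal argument to compare against. Your self-contained derivation is sound in both directions. For necessity, the observation that order bounded sets are disjoint weakly compact (via the standard estimate $\sum_{n=1}^{m}\varphi(|y_n|)=\varphi\bigl(\bigvee_{n=1}^{m}|y_n|\bigr)\le\varphi(u)$ for disjoint order bounded sequences) combined with \cite[Proposition 2.5.12(iii)]{MeyerPeter} gives both the order Dunford-Pettis and weak Dunford-Pettis properties directly from the definition of $DW$-$DP$. For sufficiency, the reduction is the right one: testing $T(A)$ against an arbitrary weakly compact $S$ and showing $S\circ T$ is Dunford-Pettis and $AM$-compact, hence $DW$-compact by Theorem \ref{teo.2.5.dw}, so that $(S\circ T)(A)$ is relatively compact. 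The two verifications (AM-compactness from the order Dunford-Pettis property of $T$ plus the definition of a Dunford-Pettis set, and the Dunford-Pettis property of $S\circ T$ from the weak Dunford-Pettis property of $T$ plus the fact that a weakly null sequence inside a relatively compact set is norm null) are both carried out correctly. This is a clean way to obtain the cited equivalence from material already present in the paper; the only pedantic caveat is that one should keep in mind that disjoint weakly compact sets are tacitly assumed bounded, so that $T(A)$ is eligible to be a Dunford-Pettis set.
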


We shall give a sufficient and a necessary condition on the underlying spaces so that one of implications of the adjoint problem for $DW$-$DP$ operators holds. To do so, we need the following lemma.

\begin{lemma}\label{nlemn} If there exists a positive Banach space isomorphism from the Banach lattice $G$ to $\ell_\infty$, then ${\rm id}_{G^*}$ is a $DW$-$DP$ operator.
\end{lemma}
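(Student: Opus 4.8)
The strategy is to exploit Theorem~\ref{nnth}: it suffices to show that ${\rm id}_{G^*}$ is both order Dunford-Pettis and weak Dunford-Pettis. Since $G$ is positively isomorphic to $\ell_\infty$, the dual $G^*$ is (positively) isomorphic to $\ell_\infty^*$, and these two properties are isomorphism invariants, so the task reduces to proving that ${\rm id}_{\ell_\infty^*}$ is order Dunford-Pettis and weak Dunford-Pettis. For the weak Dunford-Pettis part, I would invoke the fact that $\ell_\infty$ has the Dunford-Pettis property (it is a $C(K)$ space, $K$ compact Hausdorff), and the Dunford-Pettis property passes to the dual, so $\ell_\infty^*$ has the Dunford-Pettis property; the identity on any Banach space with the Dunford-Pettis property is weak Dunford-Pettis (indeed it sends relatively weakly compact sets to Dunford-Pettis sets, which is the definition of the property). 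This disposes of one of the two conditions essentially for free.

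The substantive part is to show that ${\rm id}_{\ell_\infty^*}$ is order Dunford-Pettis, i.e.\ that every order bounded subset of $\ell_\infty^*$ is a Dunford-Pettis set. Equivalently, fixing $0 \le \varphi \in \ell_\infty^*$, the order interval $[-\varphi,\varphi]$ must be a Dunford-Pettis set; by solidity it is enough to handle $[0,\varphi]$. The key structural fact I would use is that $\ell_\infty$ is an $AL$-space's dual... more precisely, $\ell_\infty = (\ell_1)^*$ is an $AM$-space with unit, hence $\ell_\infty^*$ is an $AL$-space. In an $AL$-space the norm is order continuous and, crucially, order bounded sets are relatively weakly compact (an $AL$-space is a $KB$-space, and order intervals in a $KB$-space — in fact in any Banach lattice with order continuous norm — are weakly compact by \cite[Theorem 2.4.2]{MeyerPeter}). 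Since relatively weakly compact sets are Dunford-Pettis sets in any space with the Dunford-Pettis property, and $\ell_\infty^*$ has the Dunford-Pettis property, every order interval of $\ell_\infty^*$ is a Dunford-Pettis set. Hence ${\rm id}_{\ell_\infty^*}$ is order Dunford-Pettis.

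Putting the two halves together via Theorem~\ref{nnth} gives that ${\rm id}_{\ell_\infty^*}$ is $DW$-$DP$, and transporting back along the positive isomorphism $G^* \cong \ell_\infty^*$ yields that ${\rm id}_{G^*}$ is $DW$-$DP$. I expect the main point requiring care to be the justification that $\ell_\infty^*$ is an $AL$-space with order continuous norm so that its order intervals are weakly compact: the chain of identifications $\ell_\infty = C(\beta\mathbb{N}) = (\ell_1)^*$, the Kakutani representation of $AM$-spaces with unit, and the duality $AM \leftrightarrow AL$ all need to be cited cleanly (e.g.\ from \cite{Ali1} or \cite{MeyerPeter}). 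An alternative, perhaps cleaner, route that avoids order continuity entirely: $\ell_\infty$ has the Dunford-Pettis property, so $\ell_\infty^*$ has it; and $\ell_\infty^*$, being an $AL$-space, has order continuous norm, so by \cite[Corollary 2.2]{Jin} or directly, order bounded sets being relatively weakly compact in any $AL$-space finishes the argument — either way the isomorphism invariance of the two defining properties of $DW$-$DP$ operators is what lets us pull everything back to $G$.
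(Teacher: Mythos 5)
Your overall strategy is the same as the paper's: reduce via Theorem \ref{nnth} to showing that ${\rm id}_{\ell_\infty^*}$ is order Dunford--Pettis and weak Dunford--Pettis (order intervals of the $AL$-space $\ell_\infty^*$ are weakly compact because its norm is order continuous, and weakly compact sets are Dunford--Pettis sets because $\ell_\infty^*$ has the Dunford--Pettis property), and then transport along the positive isomorphism. Two of your justifications need repair, though. First, the claim that the Dunford--Pettis property passes to the dual is false in general: only the converse holds (if $X^*$ has the DPP then so does $X$), and Stegall exhibited a space with the Schur property whose dual fails the DPP. The correct reason $\ell_\infty^*$ has the DPP is precisely that it is an $AL$-space, i.e.\ an abstract $L_1$-space, and such spaces have the DPP (\cite[Theorem II.9.9]{Schaefer}); you observe that $\ell_\infty^*$ is an $AL$-space but use that fact only for order continuity, so as written the DPP of $\ell_\infty^*$ rests entirely on the false general principle. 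Second, being order Dunford--Pettis is not a Banach-space isomorphism invariant, since order intervals are not preserved by mere isomorphisms; the paper instead factors ${\rm id}_{G^*} = (S^*)^{-1} \circ {\rm id}_{\ell_\infty^*} \circ S^*$, where the positivity of $S^*$ guarantees that order bounded sets of $G^*$ are carried into order bounded sets of $\ell_\infty^*$, and $(S^*)^{-1}$ need only be bounded because bounded operators preserve Dunford--Pettis sets. This factorization is the precise point at which the positivity hypothesis enters, and your appeal to ``isomorphism invariance'' glosses over it. With these two repairs your argument coincides with the paper's.
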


\begin{proof} As $\ell_\infty^*$ is an $AL$-space, its norm is order continuous \cite[p.\,194]{Ali1} and it has the Dunford-Pettis property \cite[Theorem II.9.9]{Schaefer}, that is, every weakly compact operator from $\ell_\infty^*$ to any Banach space is Dunford-Pettis. So, relatively weakly compact subsets of $\ell_\infty^*$, in particular order intervals, are Dunford-Pettis. This proves that ${\rm id}_{\ell_\infty^*}$ is a $DW$-$DP$ operator by Theorem \ref{nnth}.

Let $S \colon \ell_\infty \longrightarrow G$ be a positive Banach space isomorphism. 
Since $S$ is positive, $S^*$ is positive as well. By \cite[Proposition 4.3.11]{livro}, $S^* \colon G^* \longrightarrow \ell_\infty^*$ is a Banach space isomorphism. 
So, $S^*$ is a positive operator, hence order bounded, $(S^*)^{-1}$ is a bounded operator,  ${\rm id}_{G^*} = (S^*)^{-1} \circ {\rm id}_{\ell_\infty^*} \circ S^*$ and ${\rm id}_{\ell_\infty^*}$ is order Dunford-Pettis and weak Dunford-Pettis. By \cite[Proposition 3.1]{Bouras}, ${\rm id}_{G^*}$ is order Dunford-Pettis and by \cite[Exercise 5.14, pg.\,356]{Ali1} it is weak Dunford-Pettis, therefore it is $DW$-$DP$.
\end{proof}

\begin{theorem} \label{Teo.DWDP}
Consider the following assertions about the $\sigma$-Dedekind complete Banach lattices $E$ and $F$.
\vspace*{-0.5em}
\begin{enumerate}
\item[\rm(i)] $E$ has order continuous norm.
\vspace*{-0.4em}
\item[\rm (ii)] An operator $T\colon E\longrightarrow F$ is $DW$-$DP$ whenever  $T^*\colon F^*\longrightarrow E^*$ is weak Dunford-Pettis.
\vspace*{-0.4em}
\item[\rm (iii)] An operator $T\colon E\longrightarrow F$ is $DW$-$DP$ whenever  $T^*\colon F^*\longrightarrow E^*$ is $DW$-$DP$.
\vspace*{-0.4em}
\item[\rm (iv)] $E$ or $F$ has order continuous norm.
\end{enumerate}
\vspace*{-0.4em}
Then {\rm(i)} $\!\Longrightarrow$ \!{\rm(ii)} $\!\Longrightarrow$ {\!\rm(iii)} $\!\Longrightarrow$ \!{\rm(iv)}.
\end{theorem}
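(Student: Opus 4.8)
The plan is to prove the chain {\rm(i)}$\Rightarrow${\rm(ii)}$\Rightarrow${\rm(iii)}$\Rightarrow${\rm(iv)} separately, using Theorem~\ref{nnth} throughout to replace ``$DW$-$DP$'' by ``order Dunford--Pettis and weak Dunford--Pettis'', together with one elementary permanence fact: bounded operators carry Dunford--Pettis sets to Dunford--Pettis sets and preserve relative weak compactness, while positive operators carry order intervals into order intervals. Combining this with Theorem~\ref{nnth} (and with the characterization, recalled before it, of weak Dunford--Pettis operators as those mapping relatively weakly compact sets to Dunford--Pettis sets), the composition of a $DW$-$DP$ operator with a bounded operator on the left, or with a positive operator on the right, is again $DW$-$DP$; I will use this several times.

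For {\rm(i)}$\Rightarrow${\rm(ii)}, assume $E$ has order continuous norm and $T^{*}$ is weak Dunford--Pettis. I would first observe, using no hypothesis on $E$, that $T$ itself is weak Dunford--Pettis: if $(x_{n})_{n}$ is weakly null in $E$ and $(y_{n}^{*})_{n}$ is weakly null in $F^{*}$, then $(x_{n})_{n}$ is weakly null in $E^{**}$ (the canonical embedding, being bounded, is weak-to-weak continuous), hence $y_{n}^{*}(Tx_{n})=(T^{*}y_{n}^{*})(x_{n})\to 0$ by the definition of ``$T^{*}$ is weak Dunford--Pettis''. Since $E$ has order continuous norm, all its order intervals are weakly compact (see \cite{MeyerPeter}), so, being weak Dunford--Pettis, $T$ maps them to Dunford--Pettis sets; that is, $T$ is order Dunford--Pettis. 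By Theorem~\ref{nnth}, $T$ is $DW$-$DP$. The implication {\rm(ii)}$\Rightarrow${\rm(iii)} is then immediate, because by Theorem~\ref{nnth} a $DW$-$DP$ operator is in particular weak Dunford--Pettis.

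For {\rm(iii)}$\Rightarrow${\rm(iv)} I would argue by contraposition, assuming that neither $E$ nor $F$ has order continuous norm. Since $E$ and $F$ are $\sigma$-Dedekind complete, each of them contains a positive, pairwise disjoint, order bounded sequence not converging to zero in norm, and forming the order sums of such a sequence (which exist by $\sigma$-Dedekind completeness) produces positive operators $\alpha_{E}\colon\ell_{\infty}\to E$ and $\alpha_{F}\colon\ell_{\infty}\to F$ that are isomorphisms onto their ranges --- this is the standard way $\ell_{\infty}$ embeds into a $\sigma$-Dedekind complete Banach lattice whose norm is not order continuous (see \cite{MeyerPeter}). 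As $\ell_{\infty}$ is an injective Banach space, $\alpha_{E}^{-1}$ extends to a bounded operator $\beta\colon E\to\ell_{\infty}$ with $\beta\alpha_{E}={\rm id}_{\ell_{\infty}}$, and similarly there is a bounded operator $\gamma\colon F\to\ell_{\infty}$ with $\gamma\alpha_{F}={\rm id}_{\ell_{\infty}}$. I would then take $T:=\alpha_{F}\circ\beta\colon E\to F$. On the dual side $T^{*}=\beta^{*}\circ\alpha_{F}^{*}$, where $\alpha_{F}^{*}$ is positive and $\beta^{*}$ is bounded, and since ${\rm id}_{\ell_{\infty}^{*}}$ is $DW$-$DP$ by Lemma~\ref{nlemn} (applied with $G=\ell_{\infty}$), writing $T^{*}=\beta^{*}\circ{\rm id}_{\ell_{\infty}^{*}}\circ\alpha_{F}^{*}$ and using the permanence fact shows that $T^{*}$ is $DW$-$DP$. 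For $T$ itself, put $u:=\alpha_{E}(e)\in E^{+}$ where $e=(1,1,1,\dots)\in\ell_{\infty}$; by positivity of $\alpha_{E}$, any $a\in\ell_{\infty}$ with $\|a\|_{\infty}\le1$ satisfies $|\alpha_{E}(a)|\le\alpha_{E}(|a|)\le u$, so $\alpha_{E}$ maps the closed unit ball $B_{\ell_{\infty}}$ into the order interval $[-u,u]$, whence $T([-u,u])\supseteq T(\alpha_{E}(B_{\ell_{\infty}}))=\alpha_{F}(\beta\alpha_{E}(B_{\ell_{\infty}}))=\alpha_{F}(B_{\ell_{\infty}})$. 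If $T$ were $DW$-$DP$, hence order Dunford--Pettis, then $T([-u,u])$ would be a Dunford--Pettis set, so would its subset $\alpha_{F}(B_{\ell_{\infty}})$, and then the bounded operator $\gamma$ would send it to the Dunford--Pettis set $\gamma(\alpha_{F}(B_{\ell_{\infty}}))=B_{\ell_{\infty}}$; but $B_{\ell_{\infty}}$ being a Dunford--Pettis set means exactly that $\ell_{\infty}^{*}$ has the Schur property, which is well known to fail. Hence $T$ is not $DW$-$DP$, so {\rm(iii)} fails.

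The steps I expect to require the most care are the structure-theoretic input --- producing a \emph{positive} isomorphic copy of $\ell_{\infty}$ out of $\sigma$-Dedekind completeness together with the failure of order continuity, rather than merely a lattice copy of $c_{0}$ --- the fact that $\ell_{\infty}^{*}$ fails the Schur property (equivalently, that $B_{\ell_{\infty}}$ is not a Dunford--Pettis set), and the adjoint bookkeeping in {\rm(iii)}$\Rightarrow${\rm(iv)}, where it is precisely the positivity of $\alpha_{E}$ and $\alpha_{F}$, and only the boundedness of $\beta$ and $\gamma$, that is used. Everything else reduces to Theorem~\ref{nnth}, Lemma~\ref{nlemn}, and the one-sided composition stability recorded at the start.
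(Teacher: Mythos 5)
Your proposal is correct, and the overall strategy coincides with the paper's: (i)$\Rightarrow$(ii) via ``adjoint weak Dunford--Pettis implies weak Dunford--Pettis'' plus weak compactness of order intervals, (ii)$\Rightarrow$(iii) via Theorem~\ref{nnth}, and (iii)$\Rightarrow$(iv) by contraposition through positive copies of $\ell_\infty$ in $E$ and $F$ together with Lemma~\ref{nlemn}. The differences are in the implementation. In (i)$\Rightarrow$(ii) you prove the duality step for weak Dunford--Pettis operators directly through the canonical embedding $E\to E^{**}$, where the paper simply cites \cite[Theorem 3.1]{Aqzzouz8}; your argument is complete and correct. In (iii)$\Rightarrow$(iv) the paper extends $S_1^{-1}$ and $S_2^{-1}$ to \emph{positive} operators on all of $E$ and $F$ via \cite[Corollary 3, p.\,111]{Schaefer}, builds positive projections $P_1,P_2$ onto the copies of $\ell_\infty$, and derives the contradiction by composing back to ${\rm id}_{\ell_\infty}$ and invoking \cite[Remark 3]{Bouras}. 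You instead extend the left inverses merely as \emph{bounded} operators using the injectivity of $\ell_\infty$ as a Banach space, arrange the bookkeeping so that positivity is needed only for the embeddings $\alpha_E,\alpha_F$ (which is exactly what the one-sided permanence of order Dunford--Pettis operators requires), and detect the failure of (iii) directly by exhibiting the order interval $[-u,u]$ whose image under $T$ contains a copy of $B_{\ell_\infty}$, which is not a Dunford--Pettis set since $\ell_\infty^*$ fails the Schur property --- the same fact the paper quotes from \cite[Remark 3]{Bouras}. Your route avoids the positive-extension theorem and the projections $P_1,P_2$ altogether, at the modest cost of spelling out the standard order-sum construction of the positive embedding of $\ell_\infty$ (the content of \cite[Theorem 4.51]{Ali1}, which the paper cites) and the permanence facts for Dunford--Pettis sets under bounded images and subsets; both are routine and correctly handled.
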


\begin{proof} {\rm(i)} $\!\Longrightarrow$ \!{\rm(ii)} Let $T\colon E\longrightarrow F$ be an operator whose adjoint $T^*\colon F^*\longrightarrow E^*$ is weak Dunford-Pettis. It follows from \cite[Theorem 3.1]{Aqzzouz8} that $T$ is weak  Dunford-Pettis. As $E$ has order continuous norm, the interval $[-x,x]$ is weakly compact in $E$ for every $x\in E_+$ \cite[Theorem 2.4.2]{MeyerPeter}. Then $T[-x,x]$ is Dunford-Pettis for every $x\in E_+$, because $T$ is weak Dunford-Pettis. Therefore $T$ is order Dunford-Pettis, hence it is $DW$-$DP$.

{\rm(ii)} $\!\Longrightarrow$ \!{\rm(iii)} This implication follows from Theorem \ref{nnth}.

{\rm(iii)} $\!\Longrightarrow$ \!{\rm(iv)} Suppose that (iv) does not hold, that is, the norms of $E$ and $F$ are not order continuous. By \cite[Theorem 4.51]{Ali1} there are lattice embeddings $S_1\colon \ell_\infty\longrightarrow E$ and $S_2\colon \ell_\infty\longrightarrow F$. Considering the closed sublattices $G_1:= S_1(\ell_\infty)$ of $E$ and $G_2 := S_2(\ell_\infty)$ of $F$, we can take the  Banach lattice isomorphisms $(S_1)^{-1}\colon G_1\longrightarrow \ell_\infty$ and $(S_2)^{-1}\colon G_2\longrightarrow\ell_\infty$. From \cite[Corollary 3, pg.\,111]{Schaefer}, there are positive operators $U_1\colon E\longrightarrow\ell_\infty$ and $U_2\colon F\longrightarrow\ell_\infty$ that extend $(S_1)^{-1}$ and $(S_2)^{-1}$, respectively. Thus $P_1:=S_1\circ U_1\colon E\longrightarrow E$ is a positive projection onto $G_1$ and $P_2:=S_2\circ U_2\colon F\longrightarrow F$ is a positive projection onto $G_2$. Note that
  $$i := S_2\circ (S_1)^{-1}\colon G_1\stackrel{(S_1)^{-1}}{\longrightarrow}\ell_\infty\stackrel{S_2}{\longrightarrow}F$$ is a lattice embedding, in particular it is a positive operator. Define
  $$T:=i\circ P_1\colon E\stackrel{P_1}{\longrightarrow}G_1\stackrel{i}{\longrightarrow}F,$$ and note that $$T^*=P_1^*\circ i^*=P_1^*\circ {\rm id}_{G_1^*}\circ i^*\colon F^*\stackrel{i^*}{\longrightarrow}G_1^*\stackrel{{\rm id}_{G_1^*}}{\longrightarrow}G_1^*\stackrel{P_1^*}{\longrightarrow}E^*\,.$$
  Since ${\rm id}_{G_1^*}$ is a  $DW$-$DP$ operator by Lemma \ref{nlemn}, $i^*$ is a positive operator as the adjoint of a lattice embedding, and $P_1^*$ is a continuous operator, $T^*$ is order Dunford-Pettis by \cite[Proposition 3.1]{Bouras} and weak Dunford-Pettis by \cite[Exercise 5.14, pg.\,356]{Ali1}, therefore $T^*$ is $DW$-$DP$.

Suppose that $T$ is a $DW$-$DP$ operator, in particular $T$ is order Dunford-Pettis. Since $P_2$ is a positive operator, 
$$R:= P_2 \circ T = P_2\circ i\circ P_1\colon E\stackrel{P_1}{\longrightarrow}G_1\stackrel{i}{\longrightarrow}F\stackrel{P_2}{\longrightarrow}G_2$$ is order Dunford-Pettis by \cite[Proposition 3.1]{Bouras}.
For the same reason, it follows that $$(S_2)^{-1}\circ R\circ S_1=(S_2)^{-1}\circ P_2\circ S_2\circ (S_1)^{-1}\circ P_1\circ S_1={\rm id}_{\ell_\infty}$$
is order Dunford-Pettis. But this is not true: the order interval $[-(1,1,1,\ldots),(1,1,1,\ldots)]$ is not a Dunford-Pettis subset of $\ell_\infty$ (see \cite[Remark 3]{Bouras}). This proves that $T$ is not a $DW$-$DP$ operator, hence (iii) does not hold. The proof is complete.
\end{proof}

The last part of the proof above actually proves the following:

\begin{corollary} Let $E$ and $F$ be $\sigma$-Dedekind complete Banach lattices. If every operator from $E$ to $F$ having order Dunford-Pettis adjoint is order Dunford-Pettis, then $E$ and $F$ have order continuous norms. 
\end{corollary}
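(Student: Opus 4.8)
The plan is to extract the construction already carried out in the proof of the implication $\mathrm{(iii)}\Longrightarrow\mathrm{(iv)}$ of Theorem~\ref{Teo.DWDP}, observing that for the corollary only its order-Dunford-Pettis half is needed. So I would argue by contradiction and assume that the norms of $E$ and $F$ are not order continuous. By \cite[Theorem~4.51]{Ali1}, together with the $\sigma$-Dedekind completeness of $E$ and $F$, there are lattice embeddings $S_1\colon \ell_\infty\longrightarrow E$ and $S_2\colon \ell_\infty\longrightarrow F$. Writing $G_1:=S_1(\ell_\infty)$ and $G_2:=S_2(\ell_\infty)$ (closed sublattices lattice isomorphic to $\ell_\infty$), I would use \cite[Corollary~3, pg.\,111]{Schaefer} to extend the lattice isomorphisms $(S_1)^{-1}\colon G_1\longrightarrow \ell_\infty$ and $(S_2)^{-1}\colon G_2\longrightarrow \ell_\infty$ to positive operators $U_1\colon E\longrightarrow \ell_\infty$ and $U_2\colon F\longrightarrow \ell_\infty$, so that $P_1:=S_1\circ U_1\colon E\longrightarrow E$ and $P_2:=S_2\circ U_2\colon F\longrightarrow F$ are positive projections onto $G_1$ and $G_2$; finally I would set $i:=S_2\circ (S_1)^{-1}\colon G_1\longrightarrow F$ (a lattice embedding) and $T:=i\circ P_1\colon E\longrightarrow F$.

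The first substantive step is to check that $T^*$ is order Dunford-Pettis. I would factor $T^*=P_1^*\circ {\rm id}_{G_1^*}\circ i^*$; by Lemma~\ref{nlemn} (applied to the positive isomorphism $(S_1)^{-1}\colon G_1\to\ell_\infty$) the operator ${\rm id}_{G_1^*}$ is $DW$-$DP$, hence order Dunford-Pettis by Theorem~\ref{nnth}; since $i^*$ is positive (being the adjoint of a positive operator), hence order bounded, and $P_1^*$ is bounded, \cite[Proposition~3.1]{Bouras} yields that $T^*$ is order Dunford-Pettis. The hypothesis of the corollary then forces $T$ to be order Dunford-Pettis as well.

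Now I would reproduce the closing argument of the proof of Theorem~\ref{Teo.DWDP}: since $P_2$ is positive, $R:=P_2\circ T\colon E\longrightarrow G_2$ is order Dunford-Pettis by \cite[Proposition~3.1]{Bouras}; precomposing with the order bounded operator $S_1$ and postcomposing with the bounded operator $(S_2)^{-1}$, and using the identities $U_k\circ S_k={\rm id}_{\ell_\infty}$ (whence $P_k\circ S_k=S_k$ and $i\circ S_1=S_2$), I obtain $(S_2)^{-1}\circ R\circ S_1={\rm id}_{\ell_\infty}$, which would therefore be order Dunford-Pettis by \cite[Proposition~3.1]{Bouras}. This is impossible, because the order interval $[-(1,1,1,\ldots),(1,1,1,\ldots)]$ is not a Dunford-Pettis subset of $\ell_\infty$ \cite[Remark~3]{Bouras}, and this contradiction completes the argument.

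I do not expect any real obstacle, since the whole construction is already present in the proof of Theorem~\ref{Teo.DWDP}. The only point deserving a little care is the verification that $T^*$ is order Dunford-Pettis via the factorization through ${\rm id}_{G_1^*}$: one must isolate the order-Dunford-Pettis conclusion of that argument (the weak-Dunford-Pettis part, and hence the full $DW$-$DP$ characterization of Theorem~\ref{nnth}, play no role here) and keep track of which factor in \cite[Proposition~3.1]{Bouras} may be merely bounded and which must be order bounded.
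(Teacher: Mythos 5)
Your proposal is essentially identical to the paper's own proof: the paper's ``proof'' of this corollary is precisely the observation that the last part of the proof of Theorem~\ref{Teo.DWDP} goes through using only the order-Dunford-Pettis half of the argument, and you have reproduced that construction ($T=i\circ P_1$, the factorization $T^*=P_1^*\circ{\rm id}_{G_1^*}\circ i^*$, and the reduction to ${\rm id}_{\ell_\infty}$ via $(S_2)^{-1}\circ P_2\circ T\circ S_1$) with the correct bookkeeping of which factors need to be order bounded for \cite[Proposition~3.1]{Bouras}. One caveat, which applies equally to the paper's version: the contradiction argument starts by assuming that the norms of \emph{both} $E$ and $F$ fail to be order continuous (both lattice embeddings $S_1\colon\ell_\infty\to E$ and $S_2\colon\ell_\infty\to F$ are needed), so what it actually establishes is that $E$ \emph{or} $F$ has order continuous norm, exactly as in item (iv) of Theorem~\ref{Teo.DWDP} --- not the conjunction asserted in the statement of the corollary. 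As written, neither your argument nor the paper's rules out the case in which exactly one of the two norms is order continuous, so either the conclusion should read ``$E$ or $F$'' or an additional argument is required.
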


\bigskip
\noindent Geraldo Botelho~~~~~~~~~~~~~~~~~~~~~~~~~~~~~~~~~~~~~~Ariel Mon\c c\~ao\\
Instituto de Matem\'atica e Estat\'istica~~~~~~~~~\,\,\,Departamento de Matemática\\
Universidade Federal de Uberl\^andia~~~~~~~~~~~~~Universidade Federal de Minas Gerais\\
38.400-902 -- Uberl\^andia -- Brazil~~~~~~~~~~~~~~~~~31.270-901 -- Belo Horizonte -- Brazil\\
e-mail: botelho@ufu.br~~~~~~~~~~~~~~~~~~~~~~~~~\,~~~~~e-mail: arieldeom@hotmail.com
%
%
%
%

\end{document}